\newcommand{\qed}{\hfill\hbox{\rule{3pt}{6pt}}}
\newtheorem{theorem}{Theorem}[section]
\newtheorem{lemma}[theorem]{Lemma}
\newtheorem{corollary}[theorem]{Corollary}
\newtheorem{proposition}[theorem]{Proposition}
\newtheorem{remark}[theorem]{Remark}
\date{}
\begin{document}
\vskip 6cm

\title{\bf On certain topological indices of graphs}
\markright{Abbreviated Article Title}

\author{
  Arber Avdullahu\\
                University of Primorska, Koper, Slovenia \\
         \texttt{arbr.avdullahu@gmail.com}
               \and
                  Slobodan Filipovski\footnote{Supported in part by the Slovenian Research Agency (research program P1-0285 and Young Researchers Grant).} \\
         University of Primorska, Koper, Slovenia \\
         \texttt{slobodan.filipovski@famnit.upr.si}
          }

 \date{}
\maketitle
\begin{abstract} In this paper we give new bounds for a several vertex-based and edge-based topological indices of graphs: Albertson irregularity index, degree variance index, Mostar and the first Zagreb index. Moreover, we give a new upper bound for the energy of graphs through $IRB$-index. Most of our results rely on a well-known characterization of the Laplacian spectral radius.
\end{abstract}

\section{Introduction}
\quad Let $G$ be an undirected graph with $n$ vertices and $m$ edges without loops and multiple edges. The degree of a vertex $v$, denoted $\deg(v)$, is the number of the vertices connected to $v$. Since $\sum_{v\in V(G)} \deg(v) =2m$, average of vertex degrees can be given as $\overline{d}(G)=\frac{2m}{n}.$  A graph is $k$-regular if every degree is equal to $k$. Otherwise, the graph is said to be an irregular graph.
Once we know the average degree of a graph, it is possible to compute more complex measures of the heterogeneity in connectivity across vertices (e.g., the extent to which there is a very big spread between well-connected and not so well-connected vertices in the graph) beyond the simpler measures of range such as the difference between the maximum degree $\Delta$ and the minimum degree $\delta$.

One such measure was proposed by Tom Snijders in \cite{snijders}, called the degree variance of the graph. This vertex-based measure is defined as the average squared deviation between the degree of each vertex and the average degree:
\begin{equation}\label{varr} Var(G)=\frac{\sum_{u\in V(G)}\left(\deg(u)-\overline{d}(G)\right)^{2}}{n}=\frac{\sum_{u\in V(G)}\left(\deg(u)-\frac{2m}{n}\right)^{2}}{n}.
\end{equation}

Note that the degree variance of a regular graph is always zero.\\
Among the oldest and most studied topological indices, there are two classical vertex-degree based topological indices–the first Zagreb index and second
Zagreb index. The Zagreb indices were first introduced by Gutman et al. in \cite{gutman, gutman2}; they present an important molecular descriptor closely correlated with many chemical properties.
The first Zagreb index $M_{1}(G)$ is defined as
\begin{equation} M_{1}(G)=\sum_{v\in V(G)}\deg(v)^{2}.
\end{equation}
A general edge-additive index is defined as the sum, over all edges, of edge effects. These effects can be of various types, but the most common ones are defined in terms of some property of the end-vertices of the considered edge. In many cases the edge contribution represents how similar its end-vertices are.
In 1997, the Albertson irregularity index of a connected graph $G$, introduced by Albertson [1], was defined by
\begin{equation}\label{irr} Irr(G)=\sum_{(u,v)\in E(G)}|\deg(u)-\deg(v)|.
\end{equation}

This index has been of interest to mathematicians, chemists and scientists from related fields due to the
fact that the Albertson irregularity index plays a major role in irregularity measures of graphs \cite{dimitrov,abdo,chen,reti}, predicting the biological activities and properties of chemical compounds in the QSAR/QSPR
modeling and the quantitative characterization of network heterogeneity. Due to their simple computation, the degree-variance $Var(G)$ and the Albertson index $Irr(G)$ belong to the family of the widely used irregularity indices. Another bond-additive index studied in this paper is $IRB$-index, defined as
\begin{equation}\label{irb} IRB(G)=\sum_{e=(u,v)\in E(G)}(\sqrt{\deg(u)}-\sqrt{\deg(v)})^{2}.
\end{equation}

The Mostar index is a recently introduced bond-additive distance-based graph invariant that measures the degree of peripherality of particular edges and of the graph as a whole.
This index was introduced in \cite{mostar}, and independently in \cite{reti}.
Let $e=(u,v)\in E(G).$ Let $n_{e}(u)$ be the number of vertices of $G$ closer to $u$ than to $v$. The Mostar index of $G$ is defined as
\begin{equation}\label{most} Mo(G)=\sum_{e=(u,v)\in E(G)} | n_{e}(u)-n_{e}(v)|.
\end{equation}


In this paper we present an inequality between the degree variance and the Albertson irregularity index, Theorem 2.2.
As a consequence of this result, we improve the well-known lower bound for the first Zagreb index, $M_{1}(G)\geq \frac{4m^{2}}{n}.$
By involving $IRB$ index we provide an upper bound for the energy of graphs, which presents an improvement of the well-known upper bound $\sqrt{2mn},$ Theorem 2.5.

In Section 3 we focus on the Mostar index. In subsection 3.1 we consider bipartite graphs with diameter three; here we derive an upper bound for $Mo(G)$ which depends on the order of the partite sets. We finish our paper with a general upper bound for the Mostar index, in terms of $m$ and $n$, Theorem 3.11.

Almost all results in this paper share the same proving key which comes from a well-known characterization of the Laplacian spectral radius, Lemma 2.1.

\newpage
\section{Laplacian matrices and their applications in estimating certain graph invariants}

\medskip
\quad Most of the results in this paper are based on a well-known upper bound for the spectral radius of the laplacian matrix. Let $G=(V, E)$ be a graph whose vertices are labelled $\{1,2,\ldots, n\}$, and let $L$ be the Laplacian matrix of $G$. The Laplacian matrix of the graph $G$ is defined as follows
\begin{equation*} \label{laplacian}
L_{ij}  = \left\{
 \begin{array}{lc} -1, & \mbox{ if } (i,j)\in E \\
                              0, & \mbox{ if } (i,j)\notin E \; \mbox {and } i\neq j\\
                              -\sum_{k\neq i} L_{ik}, & \mbox{ if } i=j.
                               \end{array}
                               \right.
                               \end{equation*}

It is well-known that $L$ is a positive semidefinite matrix. More about the properties
of the Laplacian matrices  can be found in \cite{meris, mohar}. In our proofs we use the fact that the largest eigenvalue $\lambda_{max}$ of $L$ satisfies $\lambda_{max}\leq n$, that is, $\frac{\lambda_{\max}}{n}\leq 1.$
\\
For a graph $G$ on $n$ vertices we identify a vector $x\in \mathbb{R}^{n}$ with a function $x: V(G)\rightarrow \mathbb{R}.$ The quadratic form defined by $L$ has the following expression
\begin{equation}\label{quadratic} x^{T}Lx=\sum_{(u,v)\in E(G)}(x(u)-x(v))^{2}.
\end{equation}
We also need Fiedler's \cite{eigenvalue} characterization of $\lambda_{max}.$
\begin{lemma} $$\lambda_{max}=2n\max_{x}\frac{\sum_{(u,v)\in E(G)}(x(u)-x(v))^{2}}{\sum_{u\in V(G)}\sum_{v\in V(G)}(x(u)-x(v))^{2}}$$
where $x$ is a nonconstant vector.
\end{lemma}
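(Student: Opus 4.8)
The plan is to recognise both the numerator and the denominator of the displayed fraction as quadratic forms in $x$, and then to reduce the statement to the Rayleigh--Ritz variational principle for the symmetric matrix $L$. By \eqref{quadratic} the numerator is exactly $x^{T}Lx$. For the denominator, expanding the square and carrying out the double sum gives
\begin{equation*}
\sum_{u\in V(G)}\sum_{v\in V(G)}(x(u)-x(v))^{2}=2n\sum_{u\in V(G)}x(u)^{2}-2\Bigl(\sum_{u\in V(G)}x(u)\Bigr)^{2}=2n\Bigl(\|x\|^{2}-\tfrac1n(\mathbf 1^{T}x)^{2}\Bigr),
\end{equation*}
where $\mathbf 1$ denotes the all-ones vector. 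Consequently $2n$ times the quotient equals $\dfrac{x^{T}Lx}{\|x\|^{2}-\frac1n(\mathbf 1^{T}x)^{2}}=\dfrac{x^{T}Lx}{\|Px\|^{2}}$, where $P=I-\frac1n\mathbf 1\mathbf 1^{T}$ is the orthogonal projection of $\mathbb{R}^{n}$ onto the hyperplane $\mathbf 1^{\perp}$.

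Next I would remove the projection from the picture. Since every row of $L$ sums to $0$ we have $L\mathbf 1=0$, hence $PLP=L$ and in particular $x^{T}Lx=(Px)^{T}L(Px)$. Writing $y=Px$---a nonzero vector of $\mathbf 1^{\perp}$ precisely when $x$ is nonconstant, while conversely every nonzero $y\in\mathbf 1^{\perp}$ arises this way (take $x=y$)---the expression to be maximised becomes $y^{T}Ly/\|y\|^{2}$ with $y$ ranging over all nonzero vectors orthogonal to $\mathbf 1$. It therefore remains to verify that
\begin{equation*}
\max_{0\neq y\perp\mathbf 1}\frac{y^{T}Ly}{\|y\|^{2}}=\lambda_{max}.
\end{equation*}

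For this last step I would invoke the spectral theorem. If $G$ is edgeless then $L=0$ and both sides of the lemma vanish, so assume $\lambda_{max}>0$. Since $\mathbf 1$ is an eigenvector of the symmetric matrix $L$ for the eigenvalue $0$, every eigenvector for $\lambda_{max}\neq 0$ is orthogonal to $\mathbf 1$. Hence the unconstrained Rayleigh quotient of $L$, whose maximum is $\lambda_{max}$ by Rayleigh--Ritz, already attains that maximum at a vector lying in $\mathbf 1^{\perp}$, while on the other hand restricting the feasible set to $\mathbf 1^{\perp}$ cannot raise the supremum; the two inequalities combine to give the displayed equality, and substituting back completes the proof. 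The only point that needs a little care---and the step I would single out---is precisely this observation that intersecting the feasible set with $\mathbf 1^{\perp}$ discards only the (bottom) zero eigendirection and so leaves $\lambda_{max}$ as the maximum; everything else is routine bookkeeping with the two quadratic forms.
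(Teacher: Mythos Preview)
Your argument is correct: the denominator identity, the projection trick $PLP=L$ coming from $L\mathbf 1=0$, the bijection between nonconstant $x$ and nonzero $y\in\mathbf 1^{\perp}$, and the Rayleigh--Ritz step all go through as written. The paper, however, does not supply its own proof of this lemma; it simply quotes the result from Fiedler \cite{eigenvalue} and uses it as a black box, so there is no in-paper argument to compare yours against.
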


\subsection{An inequality between the degree variance and the irregularity index of graphs}

\begin{theorem} Let $G$ be a connected graph on $n$ vertices and $m$ edges. Then
\begin{equation}\label{teorema}Var(G)\geq \frac{Irr^{2}(G)}{mn^{2}}.
\end{equation}
The equality holds if and only if $G$ is a regular graph.
\end{theorem}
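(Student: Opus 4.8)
The plan is to push everything through Fiedler's characterization (Lemma~2.1) applied to the degree function itself, and then to compare the resulting sum of squared degree differences over edges with $Irr^{2}(G)$ by Cauchy--Schwarz.

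First I would record the standard reformulation of the degree variance. Expanding the square in \eqref{varr} and using $\sum_{u\in V(G)}\deg(u)=2m$ gives $n\,Var(G)=M_{1}(G)-\tfrac{4m^{2}}{n}$; applying the same expansion to the double sum $\sum_{u\in V(G)}\sum_{v\in V(G)}(\deg(u)-\deg(v))^{2}$ yields $2nM_{1}(G)-8m^{2}$, whence
\begin{equation*}
Var(G)=\frac{1}{2n^{2}}\sum_{u\in V(G)}\sum_{v\in V(G)}\big(\deg(u)-\deg(v)\big)^{2}.
\end{equation*}
This is precisely the denominator occurring in Lemma~2.1 when the test vector $x$ is taken to be the degree sequence.

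Next, assuming $G$ is not regular (so that $x(u):=\deg(u)$ is a nonconstant vector), I would substitute $x$ into Lemma~2.1 and use $\lambda_{max}\le n$:
\begin{equation*}
n\ \ge\ \lambda_{max}\ \ge\ 2n\cdot\frac{\displaystyle\sum_{(u,v)\in E(G)}\big(\deg(u)-\deg(v)\big)^{2}}{2n^{2}\,Var(G)},
\end{equation*}
which rearranges to $Var(G)\ge \tfrac{1}{n^{2}}\sum_{(u,v)\in E(G)}(\deg(u)-\deg(v))^{2}$. Then the Cauchy--Schwarz inequality over the $m$ edges gives
\begin{equation*}
Irr^{2}(G)=\Big(\sum_{(u,v)\in E(G)}|\deg(u)-\deg(v)|\Big)^{2}\ \le\ m\sum_{(u,v)\in E(G)}\big(\deg(u)-\deg(v)\big)^{2},
\end{equation*}
and combining the last two displays yields $Var(G)\ge \tfrac{Irr^{2}(G)}{mn^{2}}$. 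For a regular $G$ both sides equal $0$, so \eqref{teorema} holds in that case as well.

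For the equality statement, the ``if'' direction is immediate since $Var(G)=Irr(G)=0$ for regular graphs. For the converse one traces back the two inequalities used: equality in Cauchy--Schwarz forces $|\deg(u)-\deg(v)|$ to be constant over all edges, while equality in the Laplacian estimate forces $\lambda_{max}=n$ together with $x=\deg$ being an extremal vector in Fiedler's formula. I expect this equality analysis to be the main obstacle: these necessary conditions do not obviously exclude non-regular graphs, so establishing that connectedness together with a constant edge-gap forces $\deg$ to be constant is the delicate point of the argument.
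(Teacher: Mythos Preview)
Your argument for the inequality is essentially the paper's: both combine Fiedler's characterization (Lemma~2.1) with the QM--AM/Cauchy--Schwarz estimate on the edge terms $|\deg(u)-\deg(v)|$. The only differences are cosmetic---the paper centers the degree vector by setting $x(u)=\deg(u)-\tfrac{2m}{n}$ (which leaves every difference $x(u)-x(v)$ unchanged) and applies the two inequalities in the opposite order.

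Your hesitation about the equality analysis is entirely warranted and in fact pinpoints a flaw in the paper itself. The paper's argument handles only the ``if'' direction; the sentence about QM--AM equality forcing $\deg(u)=\deg(v)$ is not a valid ``only if'' step, since QM--AM equality merely forces the edge gaps $|\deg(u)-\deg(v)|$ to be constant. And indeed the ``only if'' claim is false as stated: for $G=K_{p,q}$ with $p\neq q$ one has $m=pq$, $Irr(G)=pq\,|p-q|$, and $Var(G)=\tfrac{pq(p-q)^{2}}{n^{2}}$, so $Var(G)=\tfrac{Irr^{2}(G)}{mn^{2}}$ exactly. From the viewpoint of your two equality conditions this is consistent: in $K_{p,q}$ the edge gap $|p-q|$ is constant, $\lambda_{\max}=n$, and the centered degree vector is (a scalar multiple of) the eigenvector for $\lambda_{\max}$, so both inequalities in the chain are tight. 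Thus the obstacle you flagged is not merely delicate---it cannot be overcome, and the equality characterization in the statement needs to be amended.
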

\begin{proof}
From the inequality between arithmetic and quadratic mean for the numbers $|\deg(u)-\deg(v)|$, where $(u,v)\in E(G),$ we get
\begin{equation}
\begin{gathered}
 Irr(G)=\sum_{(u,v)\in E(G)}|\deg(u)-\deg(v)|\leq m\cdot \sqrt{\frac{\sum_{(u,v)\in E(G)}(\deg(u)-\deg(v))^{2}}{m}}\\
=\sqrt{m}\sqrt{\sum_{(u,v)\in E(G)}(\deg(u)-\deg(v))^{2}}= \sqrt{m}\sqrt{\sum_{(u,v)\in E(G)}((\deg(u)-\frac{2m}{n})-(\deg(v)-\frac{2m}{n}))^{2}}.
\end{gathered}\label{hm}
\end{equation}

For the graph $G$ we define a vector $x \in \mathbb{R}^{n}$ such that $x(u)=\deg(u)-\frac{2m}{n},$ where $u\in V(G).$
The inequality in (\ref{hm}) becomes
\begin{gather*}
Irr(G)\leq \sqrt{m} \sqrt{\sum_{(u,v)\in E(G)}\left((\deg(u)-\frac{2m}{n})-(\deg(v)-\frac{2m}{n})\right)^{2}}=
\sqrt{m}\sqrt{\sum_{(u,v)\in E(G)}(x(u)-x(v))^{2}}.
\end{gather*}
Using arithmetic calculations it is easy to see that:
\begin{equation}\label{eq:1}
\frac{1}{2}\cdot \sum_{u\in V(G)}\sum_{v\in V(G)}(x(u)-x(v))^{2} = n\cdot \sum_{u\in V(G)}x(u)^{2}-(\sum_{v\in V(G)}x(v))^{2}
\end{equation}
Now, applying Lemma 2.1 and equation (\ref{eq:1}) we obtain
\begin{gather*}
Irr(G)\leq \sqrt{m}\sqrt{\frac{\lambda_{\max}}{2n}\cdot \sum_{u\in V(G)}\sum_{v\in V(G)}(x(u)-x(v))^{2}}=
\sqrt{m}\sqrt{\frac{\lambda_{\max}}{n}[n\sum_{u\in V(G)}x(u)^{2}-(\sum_{v\in V(G)}x(v))^{2}]}.
\end{gather*}
From $\sum_{u\in V(G)}x(u)^{2}=\sum_{u\in V(G)} (\deg(u)-\frac{2m}{n})^{2}=n\cdot Var(G)$ and from $\sum_{v\in V(G)}x(v)=\sum_{v\in V(G)}(\deg(v)-\frac{2m}{n})=\sum_{v\in V(G)}(\deg(v)) - 2m=0$ we get

\begin{equation}\label{ravenka} Irr(G)\leq\sqrt{m}\sqrt{\frac{\lambda_{\max}}{n}\cdot n^{2}Var(G)}\leq n\sqrt{m}\sqrt{Var(G)}.
\end{equation}
From (\ref{ravenka}) we get
$$Var(G)\geq \frac{Irr^{2}(G)}{mn^{2}}.$$
Since the equality between arithmetic and quadratic mean holds when $\deg(u)=\deg(v)$ for each $u,v \in V(G)$, we obtain that
the equality in (\ref{teorema}) holds when $G$ is a regular graph. On the other hand, if $G$ is a regular graph we get $Var(G)=Irr(G)=0.$
\qed

\end{proof}

\subsection{A new lower bound for the first Zagreb index}
\quad As a consequence of  Theorem 2.2, we  improve the well-known lower bound for the first Zagreb index, $M_{1}(G)\geq \frac{4m^{2}}{n}$.
\begin{corollary} Let $G$ be a graph with $m$ edges and $n$ vertices. Then
$$M_{1}(G)\geq \frac{4m^{2}}{n}+\frac{Irr^{2}(G)}{mn}.$$
\end{corollary}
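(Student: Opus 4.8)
The plan is to express $M_1(G)$ in terms of $Var(G)$ by expanding the square in the definition of the degree variance, and then invoke Theorem 2.2 directly. The only real content is the identity connecting the first Zagreb index and the degree variance; everything else is a one-line substitution.

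First I would expand $\eqref{varr}$: writing $n\cdot Var(G)=\sum_{u\in V(G)}\bigl(\deg(u)-\tfrac{2m}{n}\bigr)^{2}$ and multiplying out, the cross term is $-\tfrac{4m}{n}\sum_{u}\deg(u)=-\tfrac{8m^{2}}{n}$ (using $\sum_u\deg(u)=2m$), and the constant term is $n\cdot\tfrac{4m^{2}}{n^{2}}=\tfrac{4m^{2}}{n}$. Hence
\begin{equation*}
n\cdot Var(G)=M_{1}(G)-\frac{8m^{2}}{n}+\frac{4m^{2}}{n}=M_{1}(G)-\frac{4m^{2}}{n},
\end{equation*}
equivalently $M_{1}(G)=\frac{4m^{2}}{n}+n\cdot Var(G)$. (This recovers the classical bound $M_{1}(G)\ge\frac{4m^{2}}{n}$ since $Var(G)\ge 0$.)

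Next I would apply Theorem 2.2, which gives $Var(G)\ge \frac{Irr^{2}(G)}{mn^{2}}$, so $n\cdot Var(G)\ge\frac{Irr^{2}(G)}{mn}$. Substituting into the identity above yields
\begin{equation*}
M_{1}(G)=\frac{4m^{2}}{n}+n\cdot Var(G)\ge \frac{4m^{2}}{n}+\frac{Irr^{2}(G)}{mn},
\end{equation*}
which is the claimed inequality. I would also note in passing that, by the equality discussion in Theorem 2.2, the bound is tight precisely for regular graphs, where both $Var(G)$ and $Irr(G)$ vanish and it reduces to $M_{1}(G)=\frac{4m^{2}}{n}$.

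There is essentially no obstacle here: the argument is a routine algebraic rearrangement followed by a citation of the previously proved theorem. The one point to be careful about is bookkeeping in the expansion of the square (keeping track of the factor $n$ in $Var(G)$ and the substitution $\sum_u\deg(u)=2m$), but no new idea or estimate is required.
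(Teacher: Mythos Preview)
Your proof is correct and follows essentially the same route as the paper: derive the identity $Var(G)=\dfrac{M_{1}(G)}{n}-\dfrac{4m^{2}}{n^{2}}$ by expanding the square, then plug in Theorem~2.2. The paper's argument is the same one-line substitution, only stated more tersely.
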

\begin{proof} Recall, the first Zagreb index for the graph $G$ is defined as
\begin{equation}\label{eden} M_{1}(G)=\sum_{u\in V(G)} \deg(u)^{2}.\end{equation}
It is easy to show that
\begin{equation}\label{dva}Var(G)=\frac{M_{1}(G)}{n}-\frac{4m^{2}}{n^{2}}.\end{equation}
From Theorem 2.2 and (\ref{dva}) we obtain the required inequality.
\qed
\end{proof}
\begin{remark} Let $G$ be a graph on $n$ vertices and $m$ edges, and let $d_{1},d_{2},\ldots, d_{n}$ be its vertex-degrees. Corollary 2.3 is equivalent to the inequality
$$d_{1}^{2}+d_{2}^{2}+\ldots+d_{n}^{2}\geq \frac{(d_{1}+d_{2}+\ldots+d_{n})^{2}}{n}+\frac{(\sum_{i\sim j}|d_{i}-d_{j}|)^{2}}{mn},$$
which presents an improvement of the inequality between quadratic and arithmetic means for the positive numbers $d_{1},d_{2},\ldots, d_{n}$.
\end{remark}

\subsection{A new upper bound for the energy of graphs}
\quad An \emph{adjacency matrix} $A=A(G)$ of the graph $G$ is the
$n\times n$ matrix
$[a_{ij}]$ with $a_{ij}=1$ if $v_{i}$ is adjacent to $v_{j},$ and $a_{ij}=0$ otherwise.
The eigenvalues $\lambda_{1}, \lambda_{2}, \ldots , \lambda_{n}$ of the graph $G$ are the eigenvalues of its adjacency
matrix $A$. Since $A$ is a symmetric matrix
with zero trace, these eigenvalues are real and their sum is equal to zero.

The \emph{energy }of $G$, denoted by $E(G)$, was first defined by I. Gutman
in \cite{ivan0} as the sum of the absolute values
of its eigenvalues. Thus,
\begin{equation}\label{energija} E(G)=\sum_{i=1}^{n} |\lambda_{i}|.
\end{equation}
This concept arose in theoretical chemistry, since
it can be used to approximate the total $\pi$-electron energy of a molecule.
The first result relating the
energy of a graph with its order and size is the following upper bound
obtained  in 1971 by McClelland \cite{pi}:
\begin{equation}\label{bound2}
E(G)\leq \sqrt{2mn}.
\end{equation}
Since then, numerous other bounds for $E(G)$ were discovered, see \cite{babic, ivan5, jahan1, li}.
In \cite{kulen1} Koolen and Moulton improved the bound (\ref{bound2}) as follows: If $2m>n$ and $G$ is a graph with $n$ vertices and $m$ edges, then
\begin{equation}\label{kulen}
E(G)\leq \frac{2m}{n}+\sqrt{(n-1)\left(2m-\left(\frac{2m}{n}\right)^{2}\right)}.
\end{equation}

We improve the bound in (\ref{bound2}) by using $IRB$ index for a given graph and the technique presented in this paper. Recall, the $IRB$-index for the graph $G$ is defined as \\ $IRB(G)=\sum_{(u,v)\in E(G)}(\sqrt{\deg(u)}-\sqrt{\deg(v)})^2$.

\begin{theorem} Let $G$ be a connected graph on $n$ vertices and $m$ edges. Then
$$E(G) \leq \sqrt{2mn - IRB(G)}.$$
\end{theorem}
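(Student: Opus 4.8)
The plan is to mimic the structure of the proof of Theorem 2.2, but now applied to a cleverly chosen vector built from the \emph{square roots} of the degrees rather than the degrees themselves. The starting point is the standard identity $E(G)^{2}\le n\sum_{i=1}^{n}\lambda_{i}^{2} = n\cdot\mathrm{tr}(A^{2}) = 2mn$, which is exactly how the McClelland bound (\ref{bound2}) is obtained via Cauchy--Schwarz. To improve it I want to subtract off a nonnegative quantity, and the natural candidate to relate to $IRB(G)$ via Lemma 2.1 is $\sum_{(u,v)\in E(G)}(\sqrt{\deg(u)}-\sqrt{\deg(v)})^{2}$, which is precisely the quadratic form $x^{T}Lx$ from (\ref{quadratic}) for the vector $x(u)=\sqrt{\deg(u)}$.

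The key steps, in order, would be: (1) define $x\in\mathbb{R}^{n}$ by $x(u)=\sqrt{\deg(u)}$, so that by (\ref{quadratic}) we have $x^{T}Lx = \sum_{(u,v)\in E(G)}(\sqrt{\deg(u)}-\sqrt{\deg(v)})^{2} = IRB(G)$; (2) apply Lemma 2.1, which gives $IRB(G)\le \dfrac{\lambda_{\max}}{2n}\sum_{u\in V(G)}\sum_{v\in V(G)}(x(u)-x(v))^{2}$, and then use the identity (\ref{eq:1}) together with $\lambda_{\max}\le n$ to get
\begin{equation*}
IRB(G)\le n\sum_{u\in V(G)}x(u)^{2}-\Bigl(\sum_{v\in V(G)}x(v)\Bigr)^{2};
\end{equation*}
(3) evaluate the two sums on the right: $\sum_{u}x(u)^{2}=\sum_{u}\deg(u)=2m$, so the right-hand side equals $2mn-\bigl(\sum_{v}\sqrt{\deg(v)}\bigr)^{2}$; (4) rearrange to obtain $\bigl(\sum_{v}\sqrt{\deg(v)}\bigr)^{2}\le 2mn-IRB(G)$; (5) connect the left-hand side to the energy. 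This last connection is where one must be slightly careful: the relevant fact is that $E(G)\ge \sum_{v\in V(G)}\sqrt{\deg(v)}$ is \emph{not} what we want (wrong direction); instead I expect the intended route is that $\sum_{v}\sqrt{\deg(v)}$ is a \emph{lower} bound substitute only if combined with a matching upper bound on $E(G)$.

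The honest version of the argument is: we need an upper bound on $E(G)$, so we want $E(G)\le \sum_{v}\sqrt{\deg(v)}$. This does hold for many graphs but is false in general (e.g.\ $K_{n}$ has $E=2(n-1)$ while $\sum\sqrt{\deg(v)}=n\sqrt{n-1}$, so actually $E<\sum\sqrt{\deg}$ there — it may in fact hold widely). So the main obstacle, and the step I would scrutinize most carefully, is establishing the inequality $E(G)\le \sum_{v\in V(G)}\sqrt{\deg(v)}$, or whatever spectral inequality the authors actually invoke to pass from $\bigl(\sum_{v}\sqrt{\deg(v)}\bigr)^{2}$ to $E(G)^{2}$. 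One clean way to get such a bound is to note $E(G)=\mathrm{tr}\,|A| = \sum_{i}\langle |A| e_{i},e_{i}\rangle$ is not directly helpful, but $E(G)\le \sqrt{n\,\mathrm{tr}(A^{2})}$ only recovers McClelland; the improvement must instead come from a sharper Cauchy--Schwarz step such as $E(G)=\sum_{i}|\lambda_{i}|\cdot 1$ bounded against a non-uniform weight, or from the known inequality relating $E(G)$ to $2m$ minus a defect term. I would therefore structure the writeup around steps (1)--(4) above as the ``Laplacian technique'' contribution, and isolate step (5) as a separate spectral lemma (the McClelland-type refinement $E(G)^{2}\le 2mn - IRB(G)$ following once $\bigl(\sum_v\sqrt{\deg(v)}\bigr)^{2}\ge E(G)^{2}$ is justified), flagging that the equality case is the regular graphs, for which $IRB(G)=0$ and the bound reduces to (\ref{bound2}).
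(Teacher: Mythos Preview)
Your steps (1)--(4) are exactly the paper's argument, and you have correctly identified step (5) as the crux: one needs the inequality $E(G)\le \sum_{v\in V(G)}\sqrt{\deg(v)}$. This is not something you have to derive yourself; it is a known result of Arizmendi, Hidalgo and Juarez-Romero (\emph{Energy of a vertex}, Linear Algebra Appl.\ 557 (2018), 464--495), and the paper simply quotes it. With that citation in hand your proof is complete and coincides with the paper's.
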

\begin{proof}
For the graph $G$ we define a vector $x \in \mathbb{R}^{n}$ such that $x(u)=\sqrt{\deg(u)},$ where $u\in V(G).$ Then we rewrite $IRB(G) = \sum_{(u,v)\in E(G)}(\sqrt{\deg(u)}-\sqrt{\deg(v)})^2=\sum_{(u,v)\in E(G)} (x(u)-x(v))^{2}.$
Applying Lemma 2.1 and equation (8) we obtain
\begin{gather*}\label{energija}
IRB(G)\leq \frac{\lambda_{\max}}{n}\cdot( n\sum_{u\in V(G)}x(u)^{2}-(\sum_{v\in V(G)}x(v))^{2})  \\
\leq  n\sum_{u\in V(G)}x(u)^{2}-(\sum_{v\in V(G)}x(v))^{2} = n\sum_{u\in V(G)}\deg(u)-(\sum_{v\in V(G)}\sqrt{\deg(v)})^{2}.
\end{gather*}
In \cite{ariz} was proven that $E(G)\leq \sum_{u\in V(G)} \sqrt{\deg(u)}.$
Using this estimation we get
$$E(G) \leq \sqrt{2mn - IRB(G)}.$$
\qed
\end{proof}

Unfortunately we are not able to compare the bounds in Theorem 2.5 and (\ref{kulen}).

\section{Mostar index}
\quad As we mentioned in the introduction, the Mostar index is a new bond-additive structural invariant which measures the peripherality in graphs. Moreover, the Mostar index can be used to measure how much a given graph $G$ deviates from being distance-balanced. This index was introduced by Došlić et. al in \cite{mostar} and is studied in several publications, for example, see in \cite{mostar,gao,reti}.
In \cite{mostar} was proven that $Mo(P_{n})\leq Mo(T_{n})\leq (n-1)(n-2)=Mo(S_{n}),$ where $P_{n}, T_{n}$ and $S_{n}$ are paths, trees and stars on $n$ vertices, respectively.
We list several known results.
\begin{proposition}\cite{gao} Let $G$ be a graph of diameter $2$. Then $Mo(G)=Irr(G).$
\end{proposition}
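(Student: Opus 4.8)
The statement to prove is that for a graph $G$ of diameter $2$, we have $Mo(G)=Irr(G)$. The plan is to compute, for a fixed edge $e=(u,v)$, exactly which vertices are counted in $n_e(u)$ and in $n_e(v)$, and show that the difference $|n_e(u)-n_e(v)|$ equals $|\deg(u)-\deg(v)|$. Summing over all edges then gives the claim directly from the definitions (\ref{irr}) and (\ref{most}).

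First I would fix an edge $e=(u,v)$ and classify the vertices $w\in V(G)$ according to $d(w,u)$ and $d(w,v)$. Because $\mathrm{diam}(G)=2$, each of these distances lies in $\{0,1,2\}$, and moreover since $u\sim v$ the two distances differ by at most $1$. So the possible ``types'' of a vertex $w\ne u,v$ are: $(1,1)$ (adjacent to both), $(1,2)$ (adjacent to $u$, at distance $2$ from $v$), or $(2,1)$ (the symmetric case); the type $(2,2)$ is impossible since any neighbour-of-a-neighbour argument, together with $u\sim v$, forces $d(w,u)\le 2$ and at least one of $d(w,u),d(w,v)$ to equal $1$ — more precisely if $d(w,u)=2$ then $w$ has a common neighbour with $u$, but I only need that $d(w,v)\le d(w,u)+1$, etc. Then $w$ is counted in $n_e(u)$ precisely for type $(1,2)$, in $n_e(v)$ precisely for type $(2,1)$, and in neither for type $(1,1)$. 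Also $u$ itself is always closer to $u$ (distance $0$ vs. $1$) and $v$ is always closer to $v$, contributing $1$ to each of $n_e(u)$ and $n_e(v)$, so these cancel in the difference.

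Now I would count. The number of type-$(1,2)$ vertices is $|N(u)\setminus (N(v)\cup\{v\})|$ and the number of type-$(1,1)$ vertices is $|N(u)\cap N(v)|$; hence $n_e(u)=1+\bigl(\deg(u)-1-|N(u)\cap N(v)|\bigr)=\deg(u)-|N(u)\cap N(v)|$, where the ``$-1$'' removes $v$ from $N(u)$. Symmetrically $n_e(v)=\deg(v)-|N(u)\cap N(v)|$. Subtracting, the common-neighbour terms cancel and $n_e(u)-n_e(v)=\deg(u)-\deg(v)$, so $|n_e(u)-n_e(v)|=|\deg(u)-\deg(v)|$. Summing over all $e\in E(G)$ gives $Mo(G)=Irr(G)$.

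The main thing to be careful about — the only real obstacle — is the case analysis justifying that type $(2,2)$ cannot occur and that every vertex other than $u,v$ falls into exactly one of the three listed types; this is where the diameter-$2$ hypothesis and the edge $u\sim v$ are both used, and it must be stated cleanly so that the bookkeeping of $n_e(u)$ and $n_e(v)$ is airtight. Everything after that is elementary counting with the neighbourhoods, and the cancellation of $|N(u)\cap N(v)|$ (and of the endpoint contributions) is what makes the identity fall out.
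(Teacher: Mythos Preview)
The paper does not supply its own proof of this proposition; it is simply quoted from \cite{gao}. So there is nothing to compare your approach against within this paper.

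Your overall strategy is the right one, and the final counting $n_e(u)=\deg(u)-|N(u)\cap N(v)|$ (and symmetrically for $v$) is correct. However, your case analysis contains a genuine error: type $(2,2)$ is \emph{not} impossible. Take $G=C_5$ with vertices $1,\dots,5$ and the edge $e=(1,2)$; then vertex $4$ satisfies $d(4,1)=d(4,2)=2$. Your attempted justification (``at least one of $d(w,u),d(w,v)$ must equal $1$'') is simply false for diameter-$2$ graphs. Fortunately this does not wreck the proof: a vertex of type $(2,2)$ is equidistant from $u$ and $v$, so it contributes to neither $n_e(u)$ nor $n_e(v)$, exactly like the type-$(1,1)$ vertices. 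Thus the correct classification of $w\ne u,v$ is into the four types $(1,1),(1,2),(2,1),(2,2)$, with the first and last equidistant and hence uncounted; your formulas for $n_e(u)$ and $n_e(v)$ survive unchanged because they only enumerate the types $(1,2)$ and $(2,1)$ via $N(u)\setminus(N(v)\cup\{v\})$ and its mirror. With this correction the proof goes through.
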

\begin{proposition}\cite{gao} Let $T$ be a tree. Then $Mo(T)$ and $Irr(T)$ have the same parity.
\end{proposition}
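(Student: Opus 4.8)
The plan is to prove something slightly stronger than the statement: both $Irr(T)$ and $Mo(T)$ are even, so in particular they have the same parity. The part concerning $Irr$ actually works for an arbitrary graph $G$. First I would invoke the elementary congruence $|a-b|\equiv a+b\pmod 2$ for integers $a,b$, which gives
\begin{equation*}
Irr(G)=\sum_{(u,v)\in E(G)}|\deg(u)-\deg(v)|\equiv\sum_{(u,v)\in E(G)}\big(\deg(u)+\deg(v)\big)=\sum_{v\in V(G)}\deg(v)^{2}=M_{1}(G)\pmod 2.
\end{equation*}
Since $\deg(v)^{2}\equiv\deg(v)\pmod 2$, we have $M_{1}(G)\equiv\sum_{v\in V(G)}\deg(v)=2m\equiv 0\pmod 2$; hence $Irr(T)$ is even.

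For the Mostar side I would use the tree structure of $T$. Fix an edge $e=(u,v)$; deleting it splits $T$ into two subtrees, $T_{u}$ containing $u$ and $T_{v}$ containing $v$. The key point is that in a tree every vertex $w$ is strictly closer to exactly one of $u$ and $v$: the unique path from $w$ to $u$ and the unique path from $w$ to $v$ differ precisely in whether they use $e$, so $d(w,u)$ and $d(w,v)$ differ by exactly $1$ and no vertex is equidistant. Hence $n_{e}(u)=|V(T_{u})|$, $n_{e}(v)=|V(T_{v})|$, and $n_{e}(u)+n_{e}(v)=n$. It follows that $|n_{e}(u)-n_{e}(v)|=|2n_{e}(u)-n|\equiv n\pmod 2$ for every edge, and summing over the $n-1$ edges of $T$ gives $Mo(T)\equiv n(n-1)\equiv 0\pmod 2$.

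Putting the two computations together, $Irr(T)$ and $Mo(T)$ are both even, which proves the claim. I do not expect any genuine difficulty here; the only place where the tree hypothesis is essential (as opposed to a general connected graph) is the identity $n_{e}(u)+n_{e}(v)=n$, equivalently the absence of vertices equidistant from the two endpoints of an edge, so the main thing to be careful about is isolating and stating that fact cleanly.
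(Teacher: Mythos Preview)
Your argument is correct. In fact you prove more than required: both $Irr(T)$ and $Mo(T)$ are even, not merely of the same parity. The identity $\sum_{(u,v)\in E}(\deg(u)+\deg(v))=M_1(G)$ is used correctly, and the tree computation via $n_e(u)+n_e(v)=n$ and $|E(T)|=n-1$ is clean.

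As for comparison with the paper: this proposition is merely quoted from \cite{gao} and is not proved in the present paper, so there is no in-paper argument to compare against. Your self-contained proof is short and elementary, and your observation that the evenness of $Irr(G)$ holds for arbitrary graphs (not just trees) is a nice bonus; only the $Mo$ half genuinely needs the tree hypothesis, exactly where you flagged it.
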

\begin{theorem}\cite{gao} Let $T_{n}$ be a tree on $n$ vertices. Then
$$Mo(T_{n})\geq Irr(T_{n})$$
with equality if and only if $T_{n}$ is isomorphic with $S_{n}$.
\end{theorem}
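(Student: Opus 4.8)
The plan is to argue by induction on $n$, with the star $S_n$ as the unique equality case. For $n\le 3$ the only tree is a star and $Mo(T_n)=Irr(T_n)$ is immediate, so assume $n\ge 4$ and that the claim holds for all trees on fewer vertices. Fix a longest path of $T_n$, let $\ell$ be one of its endpoints and $p$ the neighbour of $\ell$. By maximality of the path, every neighbour of $p$ other than the next vertex $q$ along the path is a leaf; denote the leaf-neighbours of $p$ by $\ell=\ell_1,\ell_2,\dots,\ell_k$, so that $\deg(p)=k+1$ with $k\ge 1$. Set $T':=T_n-\ell$, a tree on $n-1$ vertices, and apply the induction hypothesis to it.

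First I would compare the irregularity indices. Removing $\ell$ deletes only the edge $(p,\ell)$ and lowers $\deg(p)$ from $k+1$ to $k$, leaving every other degree unchanged, so collecting the affected terms gives
\[
Irr(T_n)-Irr(T')=2k-1+\bigl(|k+1-\deg(q)|-|k-\deg(q)|\bigr).
\]
The bracketed quantity equals $+1$ when $\deg(q)\le k$ and $-1$ when $\deg(q)\ge k+1$, so in every case $Irr(T_n)-Irr(T')\le 2k$, and in fact $Irr(T_n)-Irr(T')=2k-2$ whenever $\deg(q)\ge k+1$.

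Next I would compare the Mostar indices. The key point is that for every edge $e\ne(p,\ell)$ the leaf $\ell$ and the vertex $p$ lie in the same component of $T_n-e$; letting $a_e$ denote the size of that component, the contribution of $e$ to the Mostar index is $|2a_e-n|$ in $T_n$ and $|2a_e-(n+1)|$ in $T'$, a change of $+1$ if $a_e>n/2$ and $-1$ otherwise. Since the edge $(p,\ell)$ itself contributes $n-2$ to $Mo(T_n)$ and disappears in $T'$, summing all contributions collapses to
\[
Mo(T_n)-Mo(T')=2A,
\]
where $A$ is the number of edges $e\ne(p,\ell)$ for which the component of $T_n-e$ containing $p$ has more than $n/2$ vertices.

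The heart of the matter is then the inequality $A\ge k$, which combined with the two displays and the induction hypothesis $Mo(T')\ge Irr(T')$ gives $Mo(T_n)=Mo(T')+2A\ge Irr(T')+2k\ge Irr(T_n)$. Each of the $k-1$ edges $(p,\ell_2),\dots,(p,\ell_k)$ contributes to $A$, since removing it leaves $n-1>n/2$ vertices on the side of $p$. For one further contributing edge I would distinguish two cases: if $k+1>n/2$, then the edge $(p,q)$ works, its $p$-side being the subtree consisting of $p$ and its $k$ leaves, of size $k+1$; if $k+1\le n/2$, then $T_n$ with that subtree deleted is a tree on at least two vertices, hence has a leaf $z\ne q$, and I would check that $z$ is in fact a leaf of $T_n$, so that its pendant edge contributes to $A$. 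I expect this last case analysis — in particular verifying that $z$ genuinely is a leaf of the whole tree and that no edge is counted twice — to be the main obstacle; everything else is routine. For the equality clause one computes $Mo(S_n)=Irr(S_n)=(n-1)(n-2)$ directly, while if $T_n$ is not a star then either $T'$ is not a star, so the induction hypothesis is strict, or $T_n$ is obtained from a star by attaching one extra pendant edge at a leaf, in which case $\deg(q)\ge k+1$ forces $Irr(T_n)-Irr(T')\le 2k-2<2A$ and hence again a strict inequality.
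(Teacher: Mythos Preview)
The paper does not supply a proof of this statement; it is quoted there as a known result from \cite{gao}, so there is nothing in the present paper to compare your argument against.

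Your inductive proof is nonetheless correct. The formulas $Irr(T_n)-Irr(T')=2k-1\pm 1$ and $Mo(T_n)-Mo(T')=2A$ are right (for an edge $e\neq(p,\ell)$ the contribution changes by $|2a_e-n|-|2a_e-(n+1)|$, which is $+1$ when $a_e>n/2$ and $-1$ when $a_e\le n/2$, and together with the $n-2$ from the deleted edge this telescopes to $2A$ since $A+B=n-2$). The key estimate $A\ge k$ also goes through: the $k-1$ pendant edges $(p,\ell_i)$, $i\ge 2$, each have $a_e=n-1>n/2$, and for the remaining edge, in the case $k+1\le n/2$ the $q$-side subtree has $n-k-1\ge n/2\ge 2$ vertices (using $n\ge 4$), hence at least two leaves, so a leaf $z\neq q$ exists; since the only edge joining the two pieces of $T_n$ is $(p,q)$, this $z$ is still a leaf of $T_n$, its pendant edge lies on the $q$-side (so is distinct from the edges already counted), and has $a_e=n-1>n/2$. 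The equality clause is also handled correctly: if $T'$ is a star but $T_n$ is not, the centre of $T'$ must be $q$ (it cannot be $p$, else $T_n$ itself would be a star), forcing $k=1$ and $\deg(q)=n-2\ge k+1$, whence $Irr(T_n)-Irr(T')=2k-2=0<2\le 2A$.

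One cosmetic point: dispose of the case $T_n=S_n$ explicitly before introducing $q$, so that the longest path has length at least $3$, $q$ is guaranteed to be an interior vertex, and the definition of $k$ (leaf-neighbours of $p$ excluding $q$) is unambiguous.
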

\begin{theorem} \label{retii}\cite{reti} If $G$ is a connected graph of order $n\geq 3$ and size $m$, then
$$0\leq Mo(G)\leq m(n-2)$$
with the left equality if and only if $G$ is a distance-balanced graph and with the right equality if and only if $G$ is isomorphic to the star graph $S_{n}.$
\end{theorem}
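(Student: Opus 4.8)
The plan is to prove the two inequalities and their equality cases directly from the definition of $Mo(G)$ in~(\ref{most}), working one edge at a time; unlike most results in this paper, no Laplacian machinery is needed. The lower bound is immediate: every summand $|n_{e}(u)-n_{e}(v)|$ is nonnegative, so $Mo(G)\ge 0$, and $Mo(G)=0$ holds exactly when $n_{e}(u)=n_{e}(v)$ for every edge $e=(u,v)$, which is precisely the definition of a distance-balanced graph.

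For the upper bound, fix an edge $e=(u,v)$ and partition $V(G)$ into the vertices strictly closer to $u$, those strictly closer to $v$, and those equidistant from $u$ and $v$; this gives $n_{e}(u)+n_{e}(v)\le n$. Moreover $d(u,u)=0<1=d(u,v)$ and symmetrically for $v$, so $n_{e}(u)\ge 1$ and $n_{e}(v)\ge 1$. Writing $|n_{e}(u)-n_{e}(v)|=n_{e}(u)+n_{e}(v)-2\min\{n_{e}(u),n_{e}(v)\}$ and combining the last two facts yields $|n_{e}(u)-n_{e}(v)|\le n-2$. Summing over all $m$ edges gives $Mo(G)\le m(n-2)$.

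For the equality characterization, the chain above forces, for every edge $e=(u,v)$, both $n_{e}(u)+n_{e}(v)=n$ (no vertex is equidistant) and $\min\{n_{e}(u),n_{e}(v)\}=1$; say $n_{e}(v)=1$ and hence $n_{e}(u)=n-1$. I would then extract $\deg(v)=1$ as follows: if $w$ were a neighbour of $v$ with $w\ne u$, then $d(w,v)=1$, so requiring $w$ to be strictly closer to $u$ would force $d(w,u)=0$, i.e. $w=u$, a contradiction; thus $v$ has no neighbour besides $u$. So every edge of $G$ has an endpoint of degree $1$. Since $G$ is connected with $n\ge 3$, no edge can have both endpoints of degree $1$, so the set $S$ of vertices of degree $\ge 2$ is independent and every edge joins a vertex of $S$ to a leaf; if $|S|\ge 2$ two such vertices would have disjoint neighbourhoods and no connecting path, contradicting connectivity, while $|S|=0$ is impossible for a connected graph on $n\ge 3$ vertices. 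Hence $|S|=1$ and its unique vertex is adjacent to all other $n-1$ vertices, i.e. $G\cong S_{n}$. Conversely, in $S_{n}$ each of the $m=n-1$ edges $e$ joining the centre $c$ to a leaf has $n_{e}(c)=n-1$ and $n_{e}(\mathrm{leaf})=1$, so $Mo(S_{n})=(n-1)(n-2)=m(n-2)$.

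The edge-partition bound and the verification for $S_{n}$ are routine; the step needing care is the equality analysis, specifically noticing that one must use the full strength of the conditions $n_{e}(v)=1$ \emph{and} $n_{e}(u)=n-1$ (not merely $n_{e}(v)=1$) to conclude $\deg(v)=1$, and then globalising ``every edge has a leaf endpoint'' to ``$G$ is a star'' via connectivity.
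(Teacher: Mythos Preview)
The paper does not actually prove this theorem: it is quoted from \cite{reti} as a known result and no argument is supplied. Your proposal is a correct, self-contained proof. The lower bound and its equality case are immediate from the definition, and your edgewise estimate $|n_{e}(u)-n_{e}(v)|\le n-2$ via $n_{e}(u)+n_{e}(v)\le n$ and $\min\{n_{e}(u),n_{e}(v)\}\ge 1$ is exactly the right move for the upper bound.

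Your equality analysis is sound. The key observation---that $n_{e}(v)=1$ together with $n_{e}(u)+n_{e}(v)=n$ forces any neighbour $w\ne u$ of $v$ to satisfy $d(w,u)<d(w,v)=1$, hence $w=u$---is correct, and you are right to flag that the ``no equidistant vertices'' condition is essential here (otherwise $w$ could be a common neighbour of $u$ and $v$). The passage from ``every edge has a leaf endpoint'' to $G\cong S_{n}$ is also clean: independence of the set $S$ of non-leaf vertices follows because an edge inside $S$ would violate the leaf-endpoint property, and then the component of each $s\in S$ is exactly $\{s\}\cup N(s)$, forcing $|S|=1$ by connectivity.

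In short, there is nothing to compare against in the paper itself; your direct combinatorial argument is the standard way to establish this bound and is complete as written.
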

In this section we derive several new upper bounds for the Mostar index.
\subsection{Bipartite graphs with diameter three}
\quad In \cite{gao} was proven that the graphs with diameter two have equal Mostar and Albertson index, that is, $Mo(G)=Irr(G).$
In this subsection we consider bipartite graphs with diameter three. Let $G=(V, E)$ be a bipartite graph with diameter three,  of order $n$ and size $m$. Let $V_{1}$ and $V_{2}$ be the partite sets of $G$, that is, $V=V_{1}\cup V_{2}.$
We suppose that $|V_{1}|=n_{1}$ and $|V_{2}|=n_{2}.$  Let $e=(u,v)\in E(G)$ such that $u\in V_{1}$ and $v\in V_{2}.$ Since the diameter of $G$ is three we easily observe that $n_{e}(u)=n_{1}+\deg(u)-\deg(v)$ and $n_{e}(v)=n_{2}+\deg(v)-\deg(u).$ Thus
\begin{equation} \label{mostar} Mo(G)=\sum_{(u,v)\in E(G)}|n_{e}(u)-n_{e}(v)|=\sum_{(u,v)\in E(G)}|(n_{1}+2\deg(u))-(n_{2}+2\deg(v))|.
\end{equation}

Using Lemma 2.1 we derive the following result.

\begin{theorem} Let $G$ be a bipartite graph on $n$ vertices, $m$ edges and  with diameter three. Let $V_{1}$ and $V_{2}$ be the partitive sets of $G$ such that $|V_{1}|=n_{1}$ and $|V_{2}|=n_{2}.$   Then
$$Mo(G)\leq \sqrt{m}\sqrt{\frac{\lambda_{\max}}{n}(n_{1}n_{2}n^{2}-4mn^{2}+4M_{1}(G)n-4n_{1}^{2}n_{2}^{2}-16m^{2}+16n_{1}n_{2}m)}.$$
\end{theorem}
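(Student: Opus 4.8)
The plan is to follow the same template as the proof of Theorem 2.2, but starting from the closed form (\ref{mostar}) for $Mo(G)$ that holds for bipartite graphs of diameter three. First I would apply the quadratic--arithmetic mean inequality to the $m$ summands $|(n_1+2\deg(u))-(n_2+2\deg(v))|$, where on each edge $(u,v)\in E(G)$ we fix $u\in V_1$ and $v\in V_2$, obtaining
\[
Mo(G)\le \sqrt{m}\,\sqrt{\sum_{(u,v)\in E(G)}\bigl((n_1+2\deg(u))-(n_2+2\deg(v))\bigr)^2}.
\]

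The crucial observation is that the sum under the root is a Laplacian quadratic form for a suitable vector. I would define a \emph{side-dependent} function $x\in\mathbb{R}^n$ by $x(w)=n_1+2\deg(w)$ for $w\in V_1$ and $x(w)=n_2+2\deg(w)$ for $w\in V_2$. Since $G$ is bipartite, every edge joins $V_1$ to $V_2$, so $\sum_{(u,v)\in E(G)}(x(u)-x(v))^2$ is exactly the sum above. Applying Lemma 2.1 together with the identity (\ref{eq:1}) then gives
\[
Mo(G)\le \sqrt{m}\,\sqrt{\frac{\lambda_{\max}}{n}\Bigl(n\sum_{w\in V(G)}x(w)^2-\Bigl(\sum_{w\in V(G)}x(w)\Bigr)^2\Bigr)}.
\]
In contrast with the proof of Theorem 2.5, here the factor $\lambda_{\max}/n\le 1$ is kept rather than dropped.

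It then remains to evaluate the two moments of $x$. Splitting each sum according to $V_1$ and $V_2$ and using that, by bipartiteness, $\sum_{w\in V_i}\deg(w)=m$ for $i=1,2$, that $n_1+n_2=n$, and that $M_1(G)=\sum_{w\in V_1}\deg(w)^2+\sum_{w\in V_2}\deg(w)^2$, I expect $\sum_{w\in V(G)}x(w)=n_1^2+n_2^2+4m$ and $\sum_{w\in V(G)}x(w)^2=n_1^3+n_2^3+4mn+4M_1(G)$. Substituting these and simplifying via $n_1^2+n_2^2=n^2-2n_1n_2$ and $n_1n_2(n_1^2+n_2^2)=n_1n_2n^2-2n_1^2n_2^2$ should collapse the parenthesised expression to $n_1n_2n^2-4mn^2+4M_1(G)n-4n_1^2n_2^2-16m^2+16n_1n_2m$, which is precisely the claimed quantity.

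The only genuinely substantive step is the choice of the side-dependent vector $x$, which turns each Mostar summand into an edge difference $x(u)-x(v)$; the rest is the by-now-familiar Laplacian manipulation plus a routine (if somewhat error-prone) polynomial simplification, so that is the step I would check most carefully. One small edge case: if this $x$ is constant, then $Mo(G)=0$ and the inequality is trivial, so Lemma 2.1 applies in all remaining cases.
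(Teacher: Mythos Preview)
Your proposal is essentially identical to the paper's own proof: the same side-dependent vector $x$, the same QM--AM step followed by Lemma~2.1 and identity~(\ref{eq:1}), and the same algebraic evaluation of $\sum x(w)$ and $\sum x(w)^2$. You even handle the constant-$x$ edge case that the paper silently ignores.
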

\begin{proof} For the graph $G$ we define a vector $x \in \mathbb{R}^{n}$ as follows:
 \begin{equation*} \label{laplacian}
x(w)  = \left\{
 \begin{array}{lc} n_{1}+2\deg(w), & \mbox{ if } w \in V_{1} \\
                              n_{2}+2\deg(w), & \mbox{ if } w \in V_{2} .\\

                                                             \end{array}
                               \right.
                               \end{equation*}
From (\ref{mostar}) and from the inequality between arithmetic and quadratic mean we get
$$Mo(G)\leq \sqrt{m} \sqrt{\sum_{(u,v)\in E(G)}((n_{1}+2\deg(u))-(n_{2}+2\deg(v)))^{2}}=\sqrt{m}\sqrt{\sum_{(u,v)\in E(G)}(x(u)-x(v))^{2}}.$$
Now, applying Lemma 2.1 we obtain
$$Mo(G)\leq \sqrt{m}\sqrt{\frac{\lambda_{\max}}{2n}\cdot \sum_{u\in V(G)}\sum_{v\in V(G)}(x(u)-x(v))^{2}}=\sqrt{m}\sqrt{\frac{\lambda_{\max}}{n}[n\sum_{u\in V(G)}x(u)^{2}-(\sum_{v\in V(G)}x(v))^{2}]}=$$
$$=\sqrt{m}\sqrt{\frac{\lambda_{\max}}{n}\left(n(n_{1}^{3}+n_{2}^{3}+4M_{1}(G)+4n_{1}m+4n_{2}m)-(n^{2}-2n_{1}n_{2}+4m)^{2}\right)}=$$
$$\sqrt{m}\sqrt{\frac{\lambda_{\max}}{n}(n_{1}n_{2}n^{2}-4mn^{2}+4M_{1}(G)n-4n_{1}^{2}n_{2}^{2}-16m^{2}+16n_{1}n_{2}m)}.$$

\qed

\end{proof}

\begin{remark} If $n_{1}=n_{2}$, then $Mo(G)=2\sum_{(u,v)\in E(G)}| \deg(u)-\deg(v)|=2Irr(G).$ Setting $n_{1}=n_{2}=\frac{n}{2}$ in Theorem 3.5 we get
$$Mo(G)=2\cdot Irr(G)\leq 2\sqrt{m(nM_{1}(G)-4m^{2})\frac{\lambda_{\max}}{n}},$$
which matches with the Goldberg's bound given in \cite{gold}.
\end{remark}

\begin{corollary} Let $G$ be a bipartite graph with diameter three such that $|V_{1}|=n_{1}$ and $|V_{2}|=n_{2}$. Then
$$Mo(G)\leq \sqrt{\frac{n_{1}^{2}n_{2}^{2}n^{2}}{3}+\left(\frac{2n_{1}^{2}n_{2}^{2}}{27}+\frac{n_{1}n_{2}n^{2}}{18}\right)\sqrt{4n_{1}^{2}n_{2}^{2}+3n_{1}n_{2}n^{2}}-\frac{4n_{1}^{3}n_{2}^{3}}{27}}.$$
\end{corollary}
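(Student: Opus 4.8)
The plan is to eliminate the two "free" parameters $\lambda_{\max}$ and $M_1(G)$ from the bound in Theorem 3.5, leaving an expression in $n_1,n_2$ (and $n=n_1+n_2$) only. Since $\lambda_{\max}\le n$ we have $\tfrac{\lambda_{\max}}{n}\le 1$, so it suffices to bound
$$f(m,M_1):=m\bigl(n_1n_2n^2-4mn^2+4M_1(G)n-4n_1^2n_2^2-16m^2+16n_1n_2m\bigr)$$
from above over the feasible range of $(m,M_1)$ and then take the square root. The key difficulty is the presence of $M_1(G)$ with a positive coefficient, which must be controlled purely in terms of $m,n_1,n_2$.

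First I would invoke the standard upper bound for the first Zagreb index of a bipartite graph with parts of sizes $n_1,n_2$: since each vertex in $V_1$ has degree at most $n_2$ and each vertex in $V_2$ has degree at most $n_1$, and $\sum_{u\in V_1}\deg(u)=\sum_{v\in V_2}\deg(v)=m$, convexity (or the fact that the extreme points of the degree polytope are $0/1$-type) gives $M_1(G)=\sum_{u\in V_1}\deg(u)^2+\sum_{v\in V_2}\deg(v)^2\le n_2 m + n_1 m=nm$ — more precisely the sharp bound $M_1(G)\le n_2\,m + n_1\,m$ after noting $\sum_{u\in V_1}\deg(u)^2\le n_2\sum_{u\in V_1}\deg(u)=n_2 m$ and symmetrically for $V_2$. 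Substituting $M_1(G)\le nm$ into $f$ replaces $4M_1(G)n$ by $4n^2 m$, which exactly cancels the $-4mn^2$ term, yielding
$$f(m,M_1)\le m\bigl(n_1n_2n^2-4n_1^2n_2^2-16m^2+16n_1n_2m\bigr)=:g(m).$$

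Second I would optimize $g(m)=-16m^3+16n_1n_2 m^2+(n_1n_2n^2-4n_1^2n_2^2)m$ over $m>0$ as a single-variable calculus problem: set $g'(m)=0$, i.e. $-48m^2+32n_1n_2 m+(n_1n_2n^2-4n_1^2n_2^2)=0$, solve the quadratic for the positive root $m^*=\tfrac{32n_1n_2+\sqrt{(32n_1n_2)^2+192(n_1n_2n^2-4n_1^2n_2^2)}}{96}=\tfrac{n_1n_2}{3}+\tfrac{1}{48}\sqrt{4n_1^2n_2^2+3n_1n_2n^2}$ (the discriminant simplifies because $1024n_1^2n_2^2+192n_1n_2n^2-768n_1^2n_2^2=256n_1^2n_2^2+192n_1n_2n^2=64(4n_1^2n_2^2+3n_1n_2n^2)$), and plug $m^*$ back into $g$. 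The remaining work is the routine but somewhat lengthy algebraic simplification of $g(m^*)$; writing $s=\sqrt{4n_1^2n_2^2+3n_1n_2n^2}$ and $m^*=\tfrac{n_1n_2}{3}+\tfrac{s}{48}$ and expanding, the cubic and quadratic contributions organize themselves into $\tfrac{n_1^2n_2^2 n^2}{3}+\bigl(\tfrac{2n_1^2n_2^2}{27}+\tfrac{n_1n_2 n^2}{18}\bigr)s-\tfrac{4n_1^3n_2^3}{27}$, which is exactly the quantity under the square root in the statement.

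The main obstacle is the bookkeeping in this last substitution — keeping track of the many monomials in $n_1,n_2,n$ and the surd $s$ — rather than any conceptual hurdle; everything else is a direct application of Theorem 3.5 together with $\lambda_{\max}\le n$ and the elementary bound $M_1(G)\le nm$ for bipartite graphs. One should also check the sign: because $g$ is a cubic with negative leading coefficient and $g(0)=0$, $g(m^*)$ at the unique positive critical point is indeed the maximum over the relevant interval, so the final inequality holds as stated; I would remark on this briefly to justify that $m^*$ gives a genuine upper bound even though the true $m$ of $G$ need not equal $m^*$.
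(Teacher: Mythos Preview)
Your approach is essentially the paper's: apply Theorem~3.5, replace $\lambda_{\max}/n$ by $1$ and $M_1(G)$ by $mn$ (the paper phrases this as the triangle-free bound $M_1(G)\le mn$, which is the same inequality you obtain from the degree constraints), and then maximise the cubic $g(m)=-16m^3+16n_1n_2m^2+(n_1n_2n^2-4n_1^2n_2^2)m$ over $m$. One arithmetic slip to fix: since $\sqrt{64(4n_1^2n_2^2+3n_1n_2n^2)}/96=8s/96=s/12$, the critical point is $m^*=\tfrac{n_1n_2}{3}+\tfrac{s}{12}$, not $+\tfrac{s}{48}$; this is the paper's $m_{\max}$ and is the value that actually yields the stated expression upon substitution.
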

\begin{proof} Clearly, $G$ is a triangle-free graph, and for its first Zagreb index holds $M_{1}(G)\leq mn.$ Using this bound in Theorem 3.5 we get
$$Mo(G)\leq \sqrt{m(n_{1}n_{2}n^{2}-4n_{1}^{2}n_{2}^{2}-16m^{2}+16n_{1}n_{2}m)}.$$
Let $f(x)=-16x^{3}+16n_{1}n_{2}x^{2}+(n_{1}n_{2}n^{2}-4n_{1}^{2}n_{2}^{2})x$ be a function defined on the interval $[1, n_{1}n_{2}].$ Clearly $m\in [1, n_{1}n_{2}].$
Since $f^{'}(x)=-48x^{2}+32n_{1}n_{2}x+n_{1}n_{2}n^{2}-4n_{1}^{2}n_{2}^{2}$ we get
\begin{equation} \label{max}f^{'}(x)\geq 0 \Leftrightarrow  x\in [\frac{n_{1}n_{2}}{3}-\frac{\sqrt{4n_{1}^{2}n_{2}^{2}+3n_{1}n_{2}n^{2}}}{12}, \frac{n_{1}n_{2}}{3}+\frac{\sqrt{4n_{1}^{2}n_{2}^{2}+3n_{1}n_{2}n^{2}}}{12}].
\end{equation}
From (\ref{max}) we conclude that $f(x)$ achieves its maximum (on $[1,n_{1}n_{2}])$ at $m_{\max}=\frac{n_{1}n_{2}}{3}+\frac{\sqrt{4n_{1}^{2}n_{2}^{2}+3n_{1}n_{2}n^{2}}}{12}.$ Hence
$$Mo(G)\leq \sqrt{f(m)}\leq \sqrt{f(m_{\max})}=\sqrt{m_{\max}(n_{1}n_{2}n^{2}-4n_{1}^{2}n_{2}^{2}-16m_{\max}^{2}+16n_{1}n_{2}m_{\max})}=$$
$$=\sqrt{\frac{n_{1}^{2}n_{2}^{2}n^{2}}{3}+\left(\frac{2n_{1}^{2}n_{2}^{2}}{27}+\frac{n_{1}n_{2}n^{2}}{18}\right)\sqrt{4n_{1}^{2}n_{2}^{2}+3n_{1}n_{2}n^{2}}-\frac{4n_{1}^{3}n_{2}^{3}}{27}}.$$

\qed
\end{proof}

In chemical graph theory, the Szeged index is another edge-based topological index of molecule. This index was introduced by Gutman in \cite{gutman10}.  The Szeged index of a connected graph $G$ is defined as
\begin{equation}\label{seged}
Sz(G)=\sum_{e=(u,v)\in E(G)} n_{e}(u)\cdot n_{e}(v).
\end{equation}
In the next result we give a relation between Mostar and Szeged index for bipartite graphs.
\begin{proposition}Let $G$ be a bipartite graph on $n$ vertices and $m$ edges. Then
\begin{equation}\label{bound}Mo(G)\leq \sqrt{m^{2}n^{2}-4mSz(G)}.
\end{equation}
\end{proposition}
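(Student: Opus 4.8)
The plan is to mimic the computation behind the Mostar–Albertson identity for bipartite graphs and then express everything in terms of the Szeged index. For a bipartite graph $G$ with partite sets $V_1, V_2$ and an edge $e=(u,v)$ with $u\in V_1$, $v\in V_2$, every vertex lies on one side, and $n_e(u)+n_e(v)=n$ (no vertex is equidistant, since the two endpoints have opposite parity of distance to any fixed vertex). Hence $|n_e(u)-n_e(v)|=|n-2n_e(v)|$, and more usefully $(n_e(u)-n_e(v))^2 = (n_e(u)+n_e(v))^2 - 4 n_e(u)n_e(v) = n^2 - 4 n_e(u) n_e(v)$. This is the key pointwise identity.

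Next I would pass to the sum via the arithmetic–quadratic mean inequality exactly as in the proofs of Theorems 2.2 and 3.5:
\begin{gather*}
Mo(G) = \sum_{e=(u,v)\in E(G)} |n_e(u)-n_e(v)| \leq \sqrt{m}\,\sqrt{\sum_{e=(u,v)\in E(G)} (n_e(u)-n_e(v))^2}.
\end{gather*}
Substituting the pointwise identity gives
\begin{gather*}
\sum_{e=(u,v)\in E(G)} (n_e(u)-n_e(v))^2 = \sum_{e=(u,v)\in E(G)} \big(n^2 - 4\, n_e(u) n_e(v)\big) = m n^2 - 4\sum_{e=(u,v)\in E(G)} n_e(u)n_e(v) = m n^2 - 4\, Sz(G),
\end{gather*}
where the last equality is just the definition (\ref{seged}) of the Szeged index. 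Combining the two displays yields $Mo(G) \leq \sqrt{m}\,\sqrt{m n^2 - 4\, Sz(G)} = \sqrt{m^2 n^2 - 4 m\, Sz(G)}$, which is (\ref{bound}).

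The only real subtlety — the step I would be most careful about — is the claim that no vertex of a bipartite graph is equidistant from the two endpoints of an edge, so that $n_e(u)+n_e(v)=n$ holds with no "leftover" vertices. This follows because in a bipartite graph, for any vertex $w$ and any edge $uv$, the distances $d(w,u)$ and $d(w,v)$ differ by exactly $1$ (they cannot be equal, as $u$ and $v$ lie in different colour classes and every $w$–$u$ and $w$–$v$ path has length of fixed parity determined by the colour classes of its endpoints). Once this is in hand, everything else is the same AM–QM plus definition-chasing pattern already used repeatedly in the paper, and no appeal to Lemma 2.1 is even needed here. (One may also note this bound is sharp for distance-balanced bipartite graphs, where $Mo(G)=0$ and correspondingly $4 m\, Sz(G)=m^2 n^2$ since each $n_e(u)=n_e(v)=n/2$.)
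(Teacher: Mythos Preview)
Your proof is correct and follows essentially the same route as the paper: use bipartiteness to get $n_e(u)+n_e(v)=n$, hence $(n_e(u)-n_e(v))^2=n^2-4n_e(u)n_e(v)$, then apply the arithmetic--quadratic mean inequality to the $m$ edge terms and identify the sum with $mn^2-4Sz(G)$. Your added justification of the parity step and the sharpness remark are extra details not in the paper's version, but the argument is the same.
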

\begin{proof} Since $G$ is a bipartite graph, then $n_{e}(u)+n_{e}(v)=n.$ Thus
\begin{equation}\label{haha}
(n_{e}(u)-n_{e}(v))^{2}=n^{2}-4n_{e}(u)n_{e}(v).
\end{equation}
From the inequality between quadratic and arithmetic mean for the numbers $|n_{e}(u)-n_{v}(v)|$ we get

$$\frac{Mo^{2}(G)}{m}=\frac{\left(\sum_{e\in E(G)}|n_{e}(u)-n_{e}(v)|\right)^{2}}{m}\leq \sum_{e\in E(G)}(n_{e}(u)-n_{e}(v))^{2}=mn^{2}-4Sz(G).$$
Thus $Mo^{2}(G)\leq m^{2}n^{2}-4mSz(G).$
\qed
\end{proof}

\begin{remark} When $n_{e}(u)+n_{e}(v)=n$ the parameter $n_{e}(u)n_{e}(v)$ achieves minimum $n-1$ (if one number is $n-1$ and other $1$).
Thus $Sz(G)\geq m(n-1).$ Replacing this in (\ref{bound}) we get
$Mo^{2}(G)\leq m^{2}n^{2}-4mSz(G)\leq m^{2}n^{2}-4m\cdot m\cdot(n-1)=m^{2}(n-2)^{2}.$
Hence
$$Mo(G)\leq m(n-2)$$
which is a trivial upper bound for $Mo(G).$ We note that the equality holds for the complete bipartite graph $K_{1,n-1}.$
\qed
\end{remark}

\subsection{A new upper bound for the Mostar index}
\quad In the last part of the paper we consider non triangle-free graphs. For an edge $(u,v)$ of $G$ we define $n_{uv}=|\{w\;|\; d(w,v)=d(w,u)\}|.$ It can be noted that $0\leq n_{uv}\leq n-2.$ It is easy to see that for any edge $(u,v)$ of $G$, $n_{e}(u)+n_{e}(v)=n-n_{uv}.$
\\Denote by $t(G)$ the number of triangles of a graph $G$.
We will use the following result:
\begin{proposition} For the number of triangles of $G$ it holds
$$\frac{m(4m-n^{2})}{n}\leq 3t(G)\leq \sum_{(u,v)} n_{uv}.$$
\end{proposition}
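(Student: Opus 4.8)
The plan is to establish the two inequalities separately. For the right-hand inequality $3t(G)\le\sum_{(u,v)}n_{uv}$, the idea is that every triangle contributes to the count $n_{uv}$ of several of its own edges. Fix a triangle with vertices $a,b,c$. For the edge $(a,b)$, the vertex $c$ is adjacent to both $a$ and $b$, hence $d(c,a)=d(c,b)=1$, so $c$ is counted in $n_{ab}$; symmetrically each of the three edges of the triangle picks up a contribution of at least $1$ from the opposite vertex. Summing over all edges, each triangle is therefore counted at least three times in $\sum_{(u,v)}n_{uv}$, giving $3t(G)\le\sum_{(u,v)}n_{uv}$. One should be a little careful that a fixed edge $(u,v)$ may lie in several triangles and that distinct triangles contribute via distinct common neighbours, but since $n_{uv}$ counts \emph{all} vertices $w$ with $d(w,u)=d(w,v)$, in particular all common neighbours of $u$ and $v$, the bound $n_{uv}\ge|N(u)\cap N(v)|$ holds, and $\sum_{(u,v)}|N(u)\cap N(v)|=3t(G)$ exactly; so the right inequality is immediate once this identity is noted.

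For the left-hand inequality $\frac{m(4m-n^{2})}{n}\le 3t(G)$, I would reuse the Laplacian machinery of Lemma~2.1 together with the standard identity $3t(G)=\sum_{(u,v)\in E(G)}|N(u)\cap N(v)|$. The quantity $|N(u)\cap N(v)|$ for an edge $(u,v)$ equals the number of walks of length two between $u$ and $v$, and a convenient way to get a lower bound is to relate $\deg(u)+\deg(v)$ to $|N(u)\cup N(v)|\le n$: indeed $|N(u)\cap N(v)|=\deg(u)+\deg(v)-|N(u)\cup N(v)|\ge \deg(u)+\deg(v)-n$. Summing this crude bound over all edges gives $3t(G)\ge\sum_{(u,v)\in E(G)}(\deg(u)+\deg(v))-mn=M_{1}(G)-mn$, using $\sum_{(u,v)\in E(G)}(\deg(u)+\deg(v))=\sum_{v}\deg(v)^{2}=M_{1}(G)$. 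Then the lower bound $M_{1}(G)\ge\frac{4m^{2}}{n}$ (the classical inequality, or Corollary~2.3) yields $3t(G)\ge\frac{4m^{2}}{n}-mn=\frac{m(4m-n^{2})}{n}$, which is exactly the claimed left inequality.

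The main obstacle, such as it is, is the bookkeeping in the first identity $3t(G)=\sum_{(u,v)\in E(G)}|N(u)\cap N(v)|$: one must be sure that each triangle $\{a,b,c\}$ is counted once for each of its three edges (contributing the third vertex to the common-neighbour set of that edge's endpoints), hence exactly three times overall, and that the bound $n_{uv}\ge|N(u)\cap N(v)|$ is used in the correct direction for the right inequality while the crude inclusion–exclusion bound $|N(u)\cap N(v)|\ge\deg(u)+\deg(v)-n$ drives the left one. Everything else is the elementary inequality between $M_1(G)$ and $\frac{4m^2}{n}$, which is already available in the paper. I expect no serious analytic difficulty; the statement is essentially a packaging of two well-known facts about triangle counts.
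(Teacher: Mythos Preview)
Your argument is correct on both sides. For the right inequality you make explicit exactly what the paper sketches in one line: the paper just remarks that $n_{uv}$ counts (vertices on) odd cycles through the edge $(u,v)$, of which triangles are the shortest, so $\sum_{(u,v)} n_{uv}\ge 3t(G)$; your formulation via $n_{uv}\ge |N(u)\cap N(v)|$ and $\sum_{(u,v)\in E}|N(u)\cap N(v)|=3t(G)$ is the same observation written out.

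For the left inequality the paper does not give a proof at all --- it simply cites Bollob\'as. Your route is a genuine, self-contained proof: the inclusion--exclusion bound $|N(u)\cap N(v)|\ge \deg(u)+\deg(v)-n$, summed over edges to get $3t(G)\ge M_{1}(G)-mn$, followed by $M_{1}(G)\ge 4m^{2}/n$. This is in fact one of the standard elementary derivations of the Bollob\'as bound, and it is arguably more informative here than a bare citation. One small remark: you announce that you will ``reuse the Laplacian machinery of Lemma~2.1'' for this half, but your actual argument never invokes it --- the proof you give is purely combinatorial (inclusion--exclusion plus Cauchy--Schwarz for $M_{1}\ge 4m^{2}/n$). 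That is fine, just drop the reference to Lemma~2.1 in the write-up.
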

The first inequality is due to Bollob\' {a}s \cite{bolobas}. Since $n_{uv}$ counts cycles of odd length containing an edge $(u,v)$, the second inequality is obvious.

\begin{theorem} Let $G$ be a graph on $n$ vertices and $m$ edges. Then
$$Mo(G)< \sqrt{m^{2}(n-2)^{2}-\frac{m^{2}(n-2)(4m-n^{2})}{n}}.$$
\end{theorem}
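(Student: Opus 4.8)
The plan is to mimic the Szeged-type argument of Proposition 3.9 but now for a general (possibly non-triangle-free) graph, where the identity $n_e(u)+n_e(v)=n$ fails and must be replaced by $n_e(u)+n_e(v)=n-n_{uv}$. First I would start from the quadratic-versus-arithmetic mean inequality applied to the numbers $|n_e(u)-n_e(v)|$ over $e=(u,v)\in E(G)$, exactly as in Proposition 3.9, to get
$$\frac{Mo^{2}(G)}{m}\leq \sum_{(u,v)\in E(G)}(n_e(u)-n_e(v))^{2}.$$
Next I would bound each summand. Writing $n_e(u)+n_e(v)=n-n_{uv}$ and using $n_e(u),n_e(v)\geq 1$ (each endpoint is closer to itself), we have $(n_e(u)-n_e(v))^2\leq (n-n_{uv}-2)^2=(n-2)^2-2(n-2)n_{uv}+n_{uv}^2$. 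Actually a cleaner route: since $0\le n_e(u),n_e(v)$ and their sum is $n-n_{uv}$, the difference satisfies $|n_e(u)-n_e(v)|\le n-n_{uv}-2$ when both are at least $1$, but to get a clean closed form I would instead just use $(n_e(u)-n_e(v))^2 \le (n-n_{uv})^2 - 4n_e(u)n_e(v) \le (n-n_{uv})^2-4$, and then expand. Summing over all edges yields an upper bound of the shape $mn^2 - 2n\sum_{(u,v)}n_{uv} + \sum_{(u,v)}n_{uv}^2 - 4m$ or similar; the key is that a term $-2n\sum n_{uv}$ (or $-(\text{const})\sum n_{uv}$) appears with a negative sign.

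The crucial step is then to invoke Proposition 3.10: $\sum_{(u,v)}n_{uv}\ge 3t(G)\ge \frac{m(4m-n^2)}{n}$. Substituting the Bollobás lower bound $\sum_{(u,v)}n_{uv}\ge \frac{m(4m-n^2)}{n}$ into the negative term turns the bound into one purely in terms of $m$ and $n$. To land exactly on the claimed expression $m^2(n-2)^2-\frac{m^2(n-2)(4m-n^2)}{n}$, I expect one needs to be slightly careful with which elementary bound is used for $(n_e(u)-n_e(v))^2$: the target has a leading $m^2(n-2)^2$ after multiplying through by $m$, so inside the root before multiplying by $m$ we want $m(n-2)^2 - \frac{m(n-2)(4m-n^2)}{n}$, i.e.\ the per-edge bound should be taken as $(n_e(u)-n_e(v))^2\le (n-2)(n-2-n_{uv})$, using $0\le n_{uv}$ on one factor and $n_e(u)+n_e(v)=n-n_{uv}$ together with $n_e(u),n_e(v)\ge1$ on the other. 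Summing, $\sum_{(u,v)}(n-2)(n-2-n_{uv}) = m(n-2)^2-(n-2)\sum_{(u,v)}n_{uv}\le m(n-2)^2-(n-2)\cdot\frac{m(4m-n^2)}{n}$, and multiplying by $m$ gives precisely the stated quantity. The strict inequality presumably comes from the fact that the two mean-inequality steps (QM--AM on the $|n_e(u)-n_e(v)|$, and the per-edge estimate) cannot be simultaneously tight for every edge of a genuine graph, or more simply because $4m-n^2$ forces $n_{uv}>0$ somewhere while the AM--QM step is strict unless all differences are equal.

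The main obstacle I anticipate is pinning down the exact per-edge inequality that produces the stated coefficients without slack: there are several natural candidate bounds for $(n_e(u)-n_e(v))^2$ in terms of $n$ and $n_{uv}$, and only the right combination yields the clean closed form above. A secondary subtlety is justifying the strict ``$<$'': one must check that equality throughout would force a configuration (all $|n_e(u)-n_e(v)|$ equal, all the per-edge estimates tight, and $\sum n_{uv}$ attaining the Bollobás bound) that is impossible, or at least that it cannot happen when $4m>n^2$; if $4m\le n^2$ the Bollobás bound is vacuous (the right side under the root could even exceed the trivial $m(n-2)$ bound from Theorem~\ref{retii}), so one should note the inequality is only of interest, and the argument only bites, when $4m>n^2$, and in that regime the final estimate is genuinely an improvement over $Mo(G)\le m(n-2)$.
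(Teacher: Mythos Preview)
Your proposal is correct and follows essentially the same route as the paper: QM--AM on the $|n_e(u)-n_e(v)|$, then the per-edge estimate $(n_e(u)-n_e(v))^{2}\le (n-2)(n-2-n_{uv})$ (the paper reaches this same bound after expanding $(n-n_{uv})^{2}-4(n-1-n_{uv})$ and invoking $n_{uv}\le n-2$, whereas you obtain it directly by bounding the two factors of $|n_e(u)-n_e(v)|^{2}$ by $n-2$ and $n-2-n_{uv}$), and finally Proposition~3.10. Your hesitation about the strict ``$<$'' is well placed: the paper's own chain of inequalities uses only $\le$ throughout and does not separately justify strictness.
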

\begin{proof} We already show that $ Mo^{2}(G)\leq m \sum_{(u,v)\in E(G)}(n_{e}(u)-n_{e}(v))^{2}.$ Thus
$$Mo^{2}(G)\leq m\sum_{(u,v)\in E(G)}(n_{e}(u)+n_{e}(v))^{2}-4m\sum_{(u,v)\in E(G)}n_{e}(u)n_{e}(v)\leq$$
$$\leq m\sum _{(u,v)\in E(G)}(n-n_{uv})^{2}-4m \sum_{(u,v)\in E(G)} (n-1-n_{uv})$$
$$=m^{2}n^{2}-4m^{2}(n-1)-m\sum_{(u,v)\in E(G)}n_{uv}(2n-n_{uv}-4)$$
$$\leq m^{2}(n-2)^{2}-m\sum_{(u,v)\in E(G)}n_{uv}(2n-4-(n-2))=$$
$$=m^{2}(n-2)^{2}-m(n-2)\sum_{(u,v)\in E(G)}n_{uv}$$
$$\leq m^{2}(n-2)^{2}-\frac{m^{2}(n-2)(4m-n^{2})}{n}.$$
\qed

\end{proof}
\begin{remark} In the above theorem we can assume that $m > \frac{n{2}}{4}.$ According to the Mantel theorem, it directly implies that $G$ is not a triangle-free graph. This assumption makes our bound better than the existing trivial bound $m(n-2)$ given in Theorem 3.4.
\end{remark}


\section*{Acknowledgment}

The authors would like to thank Dr. Ademir Hujdurović for introducing us with the Mostar index.

\end{document}